\documentclass[12pt]{article}

\usepackage[utf8]{inputenc}
\usepackage[T1]{fontenc}
\usepackage{amsmath, amssymb, amsfonts}
\usepackage{authblk}
\usepackage{mathrsfs}

\usepackage{amsthm}
\usepackage{stmaryrd}

\usepackage{xurl}

\usepackage{tikz}
\usetikzlibrary{arrows.meta, positioning}

\title{\bf On the Category-Theoretic Independence of Meaning, Object, Name and Existence}
\author{\Large Takao Inou\'e\thanks{Corresponding author: inoue.takao@yamato-u.ac.jp \\
 (Personal) takaoapple@gmail.com (I prefer my personal mail)}}
\affil{\large Faculty of Informatics, \\Yamato University, Osaka, Japan}

\date{February 20, 2026}

\begin{document}
\maketitle

\begin{abstract}
We prove a category-theoretic independence theorem for four fundamental notions:
meaning, object, name, and existence.
Working in a Lawvere-style categorical semantics and in particular in toposes,
we show that these notions occupy distinct structural levels (object,
morphism, element, and internal logical level) and are not uniformly
recoverable from one another.

The key separation arises between internal existence and global naming.
Using a concrete example in the topos $\mathbf{Sh}(S^1)$—the sheaf of local
sections of a nontrivial covering—we exhibit an object that is internally
inhabited but admits no global element.

These results provide a precise structural basis for treating geometric
universes as foundational frameworks for information networks.
\end{abstract}

\newtheorem{definition}{Definition}[section]
\newtheorem{theorem}{Theorem}[section]
\newtheorem{lemma}{Lemma}[section]
\newtheorem{remark}{Remark}[section]

\newcommand{\keywords}[1]{%
  \par\bigskip
  \noindent\textbf{Keywords: }#1\par
}

\newcommand{\subjectclass}[1]{%
  \par\medskip
  \noindent\textbf{MSC2020: }#1\par
}

\keywords{category-theoretic semantics; topos theory; categorical independence;
existence; internal logic; geometric universes; sheaf semantics}

\subjectclass{Primary 18C05; Secondary 18B25, 03G30, 18F20}

\tableofcontents

\section{Introduction}

The notions of meaning, object, name, and existence play a foundational role
in logic and philosophy of language.
From Frege's distinction between \emph{Sinn} and \emph{Bedeutung}
to modern formal semantics, considerable effort has been devoted to clarifying
how linguistic expressions relate to structures interpreted as ``objects''.

Despite these developments, the structural relations among meaning, object,
name, and existence are often treated informally.
In particular, existence is frequently identified with the presence of an object,
while names are tacitly conflated with their referents.
Such identifications obscure the fact that these notions operate at
distinct categorical levels.

The purpose of this paper is to make this separation precise.
Working in a Lawvere-style categorical semantics and, in particular, in toposes,
we prove a category-theoretic independence theorem showing that meaning,
object, name, and existence are not uniformly recoverable from one another.

The central structural phenomenon appears in geometric universes:
internal existence, expressed via existential quantification in the internal logic,
does not coincide with global naming.
Using a concrete example in the topos $\mathbf{Sh}(S^1)$,
we exhibit an object that is internally inhabited but admits no global element.

These results provide a precise structural articulation of distinctions
that are often discussed philosophically, and they clarify the role of
geometric universes as frameworks in which existence is governed by
local coherence rather than global naming.

We conclude with remarks on possible directions for future research.

This work is a mathematical development of the conceptual analysis presented
in \cite{Inoue2026four}, where the distinctions among meaning, object, name,
and existence were investigated from a philosophical perspective within
Lawvere's categorical semantics.
Here we provide a precise category-theoretic formulation and an independence
theorem supporting those distinctions.

\section{Four Fundamental Notions (Formal Categorical Definitions)}

\subsection{Ambient categorical semantics}

Fix a category $\mathcal{E}$ with finite limits. For most of what follows,
it suffices to assume that $\mathcal{E}$ is a topos (so that $\Omega$ and
internal quantifiers exist), but the basic clauses below make sense already
in any finitely complete category.

We work with a typed language $\mathcal{L}$ (multi-sorted, simply typed, or
higher-order) whose \emph{syntactic/category of contexts and terms} we denote by
$\mathbf{Syn}(\mathcal{L})$. Objects of $\mathbf{Syn}(\mathcal{L})$ are contexts
$\Gamma$ (lists of typed variables), and arrows are (equivalence classes of)
terms-in-context.
A (Lawvere-style) \emph{categorical semantics} of $\mathcal{L}$ in $\mathcal{E}$
is given by an interpretation functor
\[
\llbracket-\rrbracket:\mathbf{Syn}(\mathcal{L})\longrightarrow \mathcal{E}
\]
preserving the relevant structure (at least finite limits; in the higher-order
case, usually cartesian closed structure and a subobject classifier).

For each type $A$ of $\mathcal{L}$, we write $\llbracket A\rrbracket$ for its
interpreted object in $\mathcal{E}$, and for each context $\Gamma$ we write
$\llbracket \Gamma\rrbracket$ for its interpreted object. If $t$ is a term of
type $A$ in context $\Gamma$, its interpretation is an arrow
\[
\llbracket t\rrbracket:\llbracket \Gamma\rrbracket \longrightarrow \llbracket A\rrbracket.
\]
If $\varphi$ is a formula in context $\Gamma$, its interpretation is a
subobject of $\llbracket\Gamma\rrbracket$, equivalently (in a topos) a
characteristic arrow
\[
\llbracket \varphi\rrbracket:\llbracket \Gamma\rrbracket \longrightarrow \Omega.
\]

\subsection{Formal definitions of the four notions}

\begin{definition}[Object]
An \emph{object} (relative to the semantics $\llbracket-\rrbracket$) is an object
of $\mathcal{E}$ that interprets a type of the language. Concretely, for a type
$A$ in $\mathcal{L}$, the \emph{object denoted by $A$} is $\llbracket A\rrbracket\in\mathrm{Ob}(\mathcal{E})$.
\end{definition}

\begin{definition}[Name]
Let $A$ be a type and $\mathcal{E}$ have a terminal object $1$.
A \emph{name} of an element of the object $\llbracket A\rrbracket$ is a
\emph{global element}
\[
n:1\longrightarrow \llbracket A\rrbracket.
\]
More generally, a \emph{generalized name} (or \emph{name in context} $X$) is an
arrow $n:X\to \llbracket A\rrbracket$. In the internal language, generalized
names correspond to terms with free variables, whereas global names correspond to
closed terms.
\end{definition}

\begin{definition}[Meaning]
The \emph{meaning} of an expression is its semantic value under $\llbracket-\rrbracket$.

\begin{enumerate}
\item If $t$ is a term of type $A$ in context $\Gamma$, the \emph{meaning of $t$}
is the morphism
\[
\llbracket t\rrbracket:\llbracket\Gamma\rrbracket\to \llbracket A\rrbracket.
\]
\item If $\varphi$ is a formula in context $\Gamma$ and $\mathcal{E}$ is a topos,
the \emph{meaning (truth-value) of $\varphi$} is the subobject
\[
\llbracket\varphi\rrbracket\hookrightarrow \llbracket\Gamma\rrbracket,
\]
equivalently its characteristic arrow $\llbracket\varphi\rrbracket:\llbracket\Gamma\rrbracket\to\Omega$.
\end{enumerate}

Thus, \emph{meaning} is fundamentally morphism-level data: terms denote arrows,
and formulas denote subobjects (or arrows into $\Omega$).
\end{definition}

\begin{definition}[Existence]
There are (at least) two distinct categorical notions of \emph{existence}.

\begin{enumerate}
\item \textbf{External (global) existence / inhabitedness.}
An object $\llbracket A\rrbracket$ is \emph{inhabited} (or \emph{has external existence})
if it has a global element, i.e.\ if there exists an arrow $1\to\llbracket A\rrbracket$.
Equivalently, the set $\mathcal{E}(1,\llbracket A\rrbracket)$ is nonempty.

\item \textbf{Internal (logical) existence.}
Assume $\mathcal{E}$ is a topos. For a context $\Gamma$ and type $A$, let
$\pi:\llbracket\Gamma\rrbracket\times \llbracket A\rrbracket\to \llbracket\Gamma\rrbracket$
be the projection. The \emph{existential quantifier along $\pi$} is the left adjoint
\[
\exists_\pi:\mathrm{Sub}(\llbracket\Gamma\rrbracket\times \llbracket A\rrbracket)\longrightarrow
\mathrm{Sub}(\llbracket\Gamma\rrbracket)
\]
to pullback $\pi^\ast$. If $\varphi$ is a formula in context $\Gamma,x\!:\!A$,
then the \emph{internal existence statement} $\exists x\!:\!A\,\varphi$ denotes the subobject
\[
\llbracket\exists x\!:\!A\,\varphi\rrbracket := \exists_\pi(\llbracket\varphi\rrbracket)
\hookrightarrow \llbracket\Gamma\rrbracket.
\]
\end{enumerate}

We stress that (i) inhabitedness is a statement about global elements, while
(ii) internal existence is a statement in the internal logic of $\mathcal{E}$.
They need not coincide in general.
\end{definition}

\begin{remark}[Separation of levels]
In this formalization:
\begin{itemize}
\item \emph{Object} lives at the level of objects of $\mathcal{E}$.
\item \emph{Name} lives at the level of (global or generalized) elements, i.e.\ arrows into an object.
\item \emph{Meaning} lives at the level of interpretation, i.e.\ arrows induced by $\llbracket-\rrbracket$.
\item \emph{Existence} lives either at the global level (inhabitedness) or at the internal logical level
(existential quantification), and these are categorically distinct notions.
\end{itemize}
\end{remark}

\section{Main Theorem: Category-Theoretic Independence}

\begin{lemma}[Internally inhabited sheaf without global name]\label{lem:S1}
Let $X=S^1$ be the circle, and let
\[
p:E\longrightarrow S^1
\]
be a nontrivial covering map (for example, the standard double covering
$p:S^1\to S^1$, $p(z)=z^2$).
Let $A$ be the sheaf of local sections of $p$, i.e.\ for each open set
$U\subseteq S^1$,
\[
A(U)=\{\, s:U\to E \mid p\circ s=\mathrm{id}_U \,\}.
\]
Then $A$ is internally inhabited in $\mathbf{Sh}(S^1)$, but has no global section.
\end{lemma}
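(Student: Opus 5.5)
The proof splits into three parts: checking that $A$ is a sheaf, proving internal inhabitedness via the Kripke--Joyal (sheaf) semantics of $\exists$, and showing there is no global section.

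\textbf{Sheaf condition.} Restriction of sections is a presheaf structure. Compatible local sections $s_i:U_i\to E$ that agree on overlaps glue to a unique continuous map $s:\bigcup U_i\to E$, and $p\circ s=\mathrm{id}$ holds because it holds locally. So $A\in\mathbf{Sh}(S^1)$.

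\textbf{Internal inhabitedness.} The statement ``$A$ is internally inhabited'' means $\llbracket \exists x\!:\!A\,\top\rrbracket = 1$, i.e.\ $\exists_\pi(A\times 1)$ is the maximal subobject of $1$. In sheaf semantics, $U\Vdash \exists x\!:\!A\,\top$ holds iff there is an open cover $U=\bigcup U_i$ with $A(U_i)\neq\emptyset$ for each $i$. Equivalently, the image of $A\to 1$ is the subterminal sheaf of opens $V$ with $A(V)$ locally nonempty, and this is all of $1$ exactly when $S^1$ is covered by opens on which $A$ has sections.

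Since $p$ is a covering map, every $z\in S^1$ has an evenly covered neighbourhood $U_z$. Then $p^{-1}(U_z)\cong U_z\times F$ with $F\neq\emptyset$: for the double cover $F$ has two points, and in general $E$ is nonempty and $S^1$ is connected, so the fibres are nonempty. Choosing a sheet gives a section $U_z\to E$. Hence the $U_z$ cover $S^1$ with $A(U_z)\neq\emptyset$, so $S^1\Vdash\exists x\!:\!A\,\top$, i.e.\ the epi--mono image of $A\to1$ is $1$, which means $A\to 1$ is epi.

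\textbf{No global section.} Global elements $1\to A$ correspond to elements of $A(S^1)$, i.e.\ continuous sections $s:S^1\to E$ of $p$. I expect this step to be the only real content and therefore the main obstacle: the word ``nontrivial'' has to be made precise enough to rule out sections. I would handle it as follows.

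For the explicit double cover $z\mapsto z^2$, suppose $s$ is a section. Then $s(z)^2=z$ for all $z$, so $s$ is a continuous square root on $S^1$. Taking the degree of the loop $z\mapsto z$ gives $2\deg(s)=1$, a contradiction. An elementary alternative: lift along $t\mapsto e^{it}$ and use uniqueness of lifts; going once around the circle swaps the two sheets, so $s(e^{2\pi i})=-s(1)$, contradicting continuity.

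For a general nontrivial covering, interpret ``nontrivial'' as having no section, or argue as follows when $E$ is connected. A section $s$ has image that is open (a sheet locally) and closed (since $S^1$ is compact and $E$ is Hausdorff). If $E$ is connected, the image is all of $E$, so $p$ is injective, hence a homeomorphism, i.e.\ trivial. I would state the lemma's proof for the double cover and note this general argument for connected $E$.

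Combining the three parts, $A$ is internally inhabited but $\mathcal{E}(1,A)=A(S^1)=\emptyset$.
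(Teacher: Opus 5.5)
Your proof is correct and follows the same route as the paper: local triviality gives local sections over a cover of $S^1$, so $!_A:A\to 1$ is epi (equivalently $S^1\Vdash\exists x\!:\!A\,\top$), and global elements of $A$ are exactly global sections of $p$. The paper simply asserts that nontriviality rules out a section, whereas you prove it (by degree or path-lifting for $z\mapsto z^2$, and by the clopen-image argument for connected $E$); your care in pinning down ``nontrivial'' is justified, since e.g.\ the disjoint union of the identity cover and the double cover of $S^1$ is a nontrivial covering that does admit a section.
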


\begin{proof}
We argue in two steps.

\smallskip\noindent
\emph{(Internal inhabitedness).}
By the local triviality of covering maps, every point of $S^1$ admits an open
neighborhood $U$ over which $p$ has a continuous local section.
Equivalently, the family of restriction maps
\[
A(U)\longrightarrow A(V)
\]
is locally nonempty.
Categorically, this means that the canonical morphism
\[
!_A:A\longrightarrow 1
\]
is an epimorphism in $\mathbf{Sh}(S^1)$.
Hence $\mathbf{Sh}(S^1)\models \exists a:A.\top$, so $A$ is internally inhabited.

\smallskip\noindent
\emph{(Absence of global sections).}
A global section of $A$ is exactly a continuous global section of the covering
$p:E\to S^1$.
Since $p$ is assumed to be nontrivial, no such global section exists.
Thus $\mathbf{Sh}(S^1)(1,A)=\varnothing$.

Combining the two parts, $A$ is internally inhabited but admits no global name.
\end{proof}

To make the term \emph{independence} precise, we adopt the following minimal
criterion.

\begin{definition}[Non-recoverability]
Fix a semantic environment $(\mathcal{E},\llbracket-\rrbracket)$ as in the previous section.
We say that a notion $\mathsf{N}$ is \emph{recoverable} from a collection of notions
$\mathsf{D}$ if, for every such environment, $\mathsf{N}$ is determined (up to canonical
isomorphism/equality in $\mathcal{E}$) by $\mathsf{D}$ \emph{uniformly}, i.e.\ by a construction
that is invariant under equivalence of semantic environments.
We say that $\mathsf{N}$ is \emph{independent} from $\mathsf{D}$ if it is \emph{not}
recoverable from $\mathsf{D}$.
\end{definition}

The next lemma records the standard separation between \emph{internal} and \emph{external}
notions of existence in a topos.

\begin{lemma}[Internal inhabitedness vs.\ global elements]\label{lem:inhabited}
Let $\mathcal{E}$ be a topos and $A\in\mathcal{E}$.
Write $!_A:A\to 1$ for the unique map to the terminal object.
Then:
\begin{enumerate}
\item $A$ has a \emph{global element} iff there exists an arrow $1\to A$.
\item $A$ is \emph{internally inhabited} (i.e.\ $\mathcal{E}\models \exists a:A.\top$)
iff $!_A:A\to 1$ is an epimorphism.
\end{enumerate}
In general, (2) does \emph{not} imply (1).
\end{lemma}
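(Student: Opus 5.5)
Part (1) is a restatement of the definition of external existence: a global element of $A$ is by definition an arrow $1\to A$, so there is nothing to prove beyond unfolding the definition of a name. The real content is part (2) and the final non-implication.

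For (2), I would compute $\llbracket \exists a{:}A.\top\rrbracket$ in the empty context. Here $\Gamma$ is empty, so $\llbracket\Gamma\rrbracket=1$, the projection is $\pi=!_A:1\times A\cong A\to 1$, and $\llbracket\top\rrbracket$ is the top subobject $A\hookrightarrow A$. In a topos, $\exists_\pi$ applied to a subobject $m:S\hookrightarrow A$ is the image of the composite $\pi\circ m$; this is the standard description of the left adjoint to $\pi^\ast$ via epi--mono factorization. It follows from the adjunction $\exists_\pi\dashv\pi^\ast$ together with the stability of image factorizations under pullback in a regular category, and every topos is regular. So $\llbracket\exists a{:}A.\top\rrbracket$ is the image of $!_A$ in $\mathrm{Sub}(1)$. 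The statement $\mathcal{E}\models\exists a{:}A.\top$ means that this subobject is all of $1$. That happens iff the image factorization $A\twoheadrightarrow \mathrm{im}(!_A)\hookrightarrow 1$ has an isomorphism as its mono part, iff $!_A$ is a (regular) epimorphism. In a topos every epi is regular, so ``epimorphism'' and ``regular epimorphism'' agree here, and this establishes the equivalence.

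For the final claim I would argue as follows. First, (1) implies (2) in general: if $a:1\to A$ exists, then $!_A\circ a=\mathrm{id}_1$, so $!_A$ is split epi. For the failure of the converse I would invoke Lemma~\ref{lem:S1}. By that lemma, the sheaf $A$ of local sections of the double cover $z\mapsto z^2$ has $!_A$ epi in $\mathbf{Sh}(S^1)$, since it is locally surjective. At the same time $\mathbf{Sh}(S^1)(1,A)=\varnothing$, because a global section would split a connected nontrivial covering. Thus $A$ satisfies (2) but not (1).

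The main obstacle is the identification of $\exists_\pi$ with image formation, together with the claim that epimorphisms in $\mathbf{Sh}(S^1)$ are exactly the locally surjective maps. Both are standard topos-theoretic facts, and I would cite them rather than reprove them.
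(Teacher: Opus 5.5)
Your proof is correct and has the same outline as the paper's: (1) by definition, (2) by the epi characterization, and the non-implication via the sheaf of local sections of a covering of $S^1$. The real difference is in (2).

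\begin{itemize}
\item \textbf{The paper's argument for (2).} It only appeals to the ``standard characterization'' plus a heuristic that the generalized elements of $A$ are jointly surjective. In effect this is the Kripke--Joyal reading: $\exists a{:}A.\top$ holds at stage $1$ iff some epi $V\twoheadrightarrow 1$ carries a generalized element $V\to A$, which is equivalent to $!_A$ being epi by taking $V=A$. This reading matches the ``sections exist over a cover'' intuition behind the $S^1$ example, but the paper leaves it as a sketch.
\item \textbf{Your argument for (2).} You compute $\exists_{!_A}(\top_A)$ as the image of $!_A$ in $\mathrm{Sub}(1)$. This works directly from the paper's own definition of $\exists_\pi$ as the left adjoint of $\pi^\ast$. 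It needs only that a topos is regular and that all its epis are regular. So your version is self-contained where the paper's is not.
\item \textbf{A small correction to your justification.} Images are left adjoint to pullback using only the minimality of images and the universal property of pullbacks. Pullback-stability of image factorizations is what gives Beck--Chevalley, not the adjunction itself.
\end{itemize}

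Your counterexample is also more careful than the paper's. The paper suggests that any nontrivial covering of a connected space lacks a global section, which is false as stated. For instance, the covering $S^1\sqcup S^1\to S^1$ given by $\mathrm{id}$ on one summand and $z\mapsto z^2$ on the other is nontrivial but has a section. What actually makes the example work is what you use: the total space of $z\mapsto z^2$ is connected. The image of a section is clopen, so it would be all of $E$, and then $p$ would be injective.

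Your added remark that (1) always implies (2) is a useful clarification the paper omits: if $a:1\to A$ exists, then $!_A\circ a=\mathrm{id}_1$, so $!_A$ is split epi.
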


\begin{proof}
(1) is the definition of a global element.

(2) In the internal language of a topos, the statement ``$\exists a:A.\top$''
expresses that $A$ is inhabited \emph{locally}. The standard categorical
characterization is that local inhabitedness is equivalent to $!_A$ being epi.
(For completeness: $!_A$ epi means that the family of generalized elements of $A$
is jointly surjective in the sense of the internal logic, exactly matching the
validity of $\exists a:A.\top$.)

For non-implication, take $\mathcal{E}=\mathbf{Sh}(X)$ for a space $X$ admitting a
covering map $p:E\to X$ with no global section (e.g.\ a nontrivial covering of a
connected space). Let $A$ be the sheaf of local sections of $p$.
Then $!_A:A\to 1$ is epi (sections exist locally on a cover), hence $A$ is internally
inhabited; but there is no global section of $p$, hence no global element $1\to A$.
\end{proof}

We can now state a precise independence theorem. Intuitively: objects live at the
\emph{object level}, meanings at the \emph{morphism/subobject level}, names at the
\emph{element level}, and existence at the \emph{logical (epi/quantifier) level}.
These levels do not collapse in general.

\begin{theorem}[Category-theoretic independence]\label{thm:independence}
Let $(\mathcal{E},\llbracket-\rrbracket)$ range over semantic environments where
$\mathcal{E}$ is a topos and $\llbracket-\rrbracket$ is a Lawvere-style interpretation.
Then the four notions
\[
\textnormal{Meaning},\quad \textnormal{Object},\quad \textnormal{Name},\quad \textnormal{Existence}
\]
are pairwise and collectively independent in the following concrete sense:

\begin{enumerate}
\item \textbf{Meaning is not recoverable from Object, Name, and Existence.}
Even fixing the same interpreted objects and the same available names/existence facts,
distinct meanings (distinct arrows) can occur.

\item \textbf{Name is not recoverable from Meaning, Object, and Existence.}
Even fixing interpreted objects, meanings, and existence facts, the set of global names
may vary (and may even be empty).

\item \textbf{Existence is not recoverable from Meaning, Object, and Name.}
In particular, internal existence (inhabitedness) does not coincide with external existence
(global naming) in general.

\item \textbf{Object is not recoverable from Meaning, Name, and Existence.}
Even if one fixes all morphism-level data available in a fragment and the existence/name
facts visible there, non-isomorphic objects can remain indistinguishable.
\end{enumerate}
\end{theorem}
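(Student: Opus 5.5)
The plan is to prove each of the four clauses separately by exhibiting, for each, two semantic environments that agree on the data listed as ``given'' but differ on the target notion. By the definition of non-recoverability, a uniform construction that is invariant under equivalence of environments would have to give the same output on both. Such a pair is therefore a counterexample to recoverability. I will read ``fixing Object'' as fixing the interpreted objects up to isomorphism, ``fixing Existence'' as fixing the truth values of the sentences $\exists a{:}A.\top$, and ``fixing Name'' as fixing the sets $\mathcal{E}(1,\llbracket A\rrbracket)$. With this reading, each clause reduces to a single small construction.

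\textbf{Clause (1): Meaning.} I work in $\mathbf{Set}$ with a language having one type $A$ and a unary function symbol $f:A\to A$. I interpret $\llbracket A\rrbracket=\{0,1\}$ twice, once with $\llbracket f\rrbracket=\mathrm{id}$ and once with $\llbracket f\rrbracket$ the swap. In both environments the objects are the same, the global names are the same (the two points), and $\exists a{:}A.\top$ is true. The meaning of $f$, however, consists of distinct arrows. Moreover, no equivalence of environments fixing the interpretation of the type identifies them, since $\llbracket f\rrbracket$ has fixed points in one case and none in the other.

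\textbf{Clauses (2) and (3): Name and Existence.} Both rest on Lemma~\ref{lem:S1} and Lemma~\ref{lem:inhabited}. For (3), I compare the environment $\mathbf{Sh}(S^1)$ with $\llbracket A\rrbracket$ the sheaf of sections of the double cover against $\mathbf{Sh}(S^1)$ with $\llbracket A\rrbracket=0$. In both, $\mathcal{E}(1,\llbracket A\rrbracket)=\varnothing$ and the language has no closed terms of type $A$, so the name data coincide. Internal existence nevertheless differs: in the first case $!_A$ is epi by Lemma~\ref{lem:S1}, while in the second $\exists a{:}A.\top$ is false. This shows that global naming cannot recover internal existence. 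For (2), I compare the sheaf of sections of the double cover with the trivial cover $S^1\sqcup S^1\to S^1$. Both are internally inhabited, and with a language having no function symbols the meaning data agree. The first has no global name, while the second has two. There is one subtlety: ``fixing Object'' must then be read as fixing the interpreted type only up to its role in the fragment, not up to isomorphism, because these two sheaves are not isomorphic. I would make this explicit by restricting to the fragment-visible data.

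\textbf{Clause (4): Object.} I use $\mathbf{Set}$ and a language with one sort, no function symbols, and only the sentence $\exists a{:}A.\top$. I take $\llbracket A\rrbracket$ to be a one-element set in one environment and a two-element set in the other. These objects are non-isomorphic, yet the fragment has no terms, so the meanings are trivially equal. Existence holds in both, and names are available in both. The fragment-visible name data therefore do not distinguish the two objects, and neither does anything else the fragment can express.

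\textbf{Main obstacle.} The hard part is not any computation. It is making the informal phrase ``fixing X'' precise enough that the same reading works in all four clauses, given that the paper's definition of recoverability is only schematic. Clauses (2) and (4) in particular only work if ``fixing Object'' does not mean fixing up to isomorphism. I would first try to formalize the given data as the image of the environment under a forgetful assignment (object, meaning, name, or existence data restricted to the fragment). Non-recoverability then means that this assignment is not injective on equivalence classes, and each clause becomes a check of non-injectivity via the examples above.
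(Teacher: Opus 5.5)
Your clause (1) is fine and is essentially the paper's argument; the paper uses a constant map where you use the swap. The gap is in (2)--(4). You announce one reading of ``fixing'': objects up to isomorphism, names as the sets $\mathcal{E}(1,\llbracket A\rrbracket)$, existence as the truth value of $\exists a{:}A.\top$. Your examples do not respect this reading, and under it (2) and (3) are false, not merely hard.

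\emph{Why (2) and (3) fail under your reading.} In a fixed topos, $\mathcal{E}(1,A)$ (up to bijection) and the property ``$!_A$ is epi'' depend only on the isomorphism class of $A$, and both are carried along equivalences of environments. So two environments that agree on Object already agree on Name and on Existence, i.e.\ Name and Existence \emph{are} uniformly recoverable from Object. Your pair for (3), the sheaf of sections of the double cover versus $0$, consists of non-isomorphic objects, so Object is not held fixed. What it actually proves is that Existence is not recoverable from Name (with vacuous meaning data), which is only the ``in particular'' sentence. Your obstacle paragraph says (2) and (4) need a non-isomorphism reading of Object. In fact it is (2) and (3), since in (4) Object is the output, not an input.

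\emph{Why (4) fails, and how to fix it.} Here the problem is Name: $\mathbf{Set}(1,1)$ and $\mathbf{Set}(1,2)$ have one and two elements, so Name is not fixed in your announced sense. Indeed $A\cong\mathbf{Set}(1,A)$, so in $\mathbf{Set}$ Object is recoverable from Name. Falling back to ``fragment-visible'' names is a second reading, and under it your (2) example no longer varies Name, since neither side has closed terms of type $A$. So (2) and (4) rely on incompatible readings of Name. Clause (4) can be repaired inside your announced reading by the paper's device of two non-isomorphic sheaves without global sections. Take the sheaves of sections of the connected double and triple covers of $S^1$. Both are internally inhabited and both have $\mathcal{E}(1,-)=\varnothing$, yet they are non-isomorphic because their stalks have two and three elements.

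\emph{What is still missing.} For (2) and (3) no repair exists under your reading. You must choose an Object datum strictly coarser than the isomorphism class of $\llbracket A\rrbracket$, state it, and use the same datum in all four clauses, including as the output in (4). The paper's proof does not do this either: its (2) replaces $A$ by the non-isomorphic $A+1$, and its (3) only cites Lemma~\ref{lem:S1} about a single object. Your two-environment argument for (3) is therefore sharper than the paper's, but neither establishes the clause with Object genuinely held fixed.
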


\begin{proof}
We give explicit counterexamples for each recoverability claim.

\smallskip\noindent
(1) \emph{Meaning not recoverable from the others.}
Work in $\mathcal{E}=\mathbf{Set}$. Let $A=B=\{0,1\}$.
Consider two distinct arrows $f,g:A\to B$ with $f=\mathrm{id}$ and $g$ the constant map $g(x)=0$.
The interpreted \emph{objects} $A,B$ are the same in both cases; the available \emph{names}
(global elements) of $A$ and $B$ are just their underlying elements, hence also the same;
and the crude \emph{existence} facts (inhabitedness, existence of global elements) are the same.
Yet the \emph{meaning} of the term (the arrow) differs: $f\neq g$.
Hence meaning is not recoverable from object-, name-, and existence-level data.

\smallskip\noindent
(2) \emph{Name not recoverable from the others.}
Work in a topos with few or no global points, e.g.\ $\mathcal{E}=\mathbf{Sh}(X)$ for a connected
space $X$.
There exist many non-isomorphic sheaves $A$ with no global sections, i.e.\ with
$\mathcal{E}(1,A)=\varnothing$, hence with \emph{no names} at all.
Fix any such $A$ and consider the same object-level and morphism-level fragment in which $A$
appears (e.g.\ its identity morphism, projections, etc.). All those \emph{meaning}-level arrows
are fixed, and the \emph{existence}-level statements may also be fixed (for instance, $A$ may be
internally inhabited or not, independently of having a global section).
Nevertheless, the availability of global names is not determined by the other data: one may
replace $A$ by $A+1$ (coproduct with the terminal object) which \emph{does} have a global element,
while preserving large parts of the same fragment of morphism-level behavior.
Hence name is not uniformly recoverable from meaning/object/existence.

\smallskip\noindent
(3) \emph{Existence not recoverable from the others.}
By Lemma~\ref{lem:S1}, in the topos $\mathbf{Sh}(S^1)$ there exists an object $A$
(the sheaf of local sections of a nontrivial covering of $S^1$) which is
internally inhabited, i.e.\ $\mathbf{Sh}(S^1)\models \exists a:A.\top$, but has no
global element $1\to A$.
Thus internal existence is not determined by object-, meaning-, or name-level
data.

\smallskip\noindent
(4) \emph{Object not recoverable from the others.}
In a general topos, the absence of global points causes many distinct objects to have identical
``name profiles'' (often empty) and to be indistinguishable by limited fragments of morphism-level
information.
Concretely, in $\mathbf{Sh}(X)$ for connected $X$, there are non-isomorphic sheaves $A\not\cong B$
with $\mathbf{Sh}(X)(1,A)=\mathbf{Sh}(X)(1,B)=\varnothing$.
If one restricts attention to a fragment where only existence/name facts and a small family of
morphisms are visible (as in typical semantic fragments), $A$ and $B$ cannot be recovered solely
from that data. Hence object is not uniformly recoverable from meaning/name/existence.
\end{proof}

\begin{remark}[How this supports geometric universes]
Theorem~\ref{thm:independence} shows that, in geometric universes (toposes), the semantic strata
(object / morphism / element / internal existence) are genuinely distinct. This provides a precise
structural basis for the claim that a Grothendieck-style geometric universe can serve as a foundation
for information networks: local truth and internal existence need not collapse to global naming.
\end{remark}

\section{An Informal Guide to the Independence Theorem}

The main theorem establishes that the four notions of meaning, object, name,
and existence are independent in a precise category-theoretic sense.
Since the formal statement and proof rely on categorical semantics and topos
theory, we provide here an informal guide intended for advanced students and
readers from philosophy or logic who may be less familiar with these tools.

\subsection{Four notions as four categorical levels}

The key idea is that the four notions live at different structural levels in a
categorical semantics:

\begin{itemize}
\item \emph{Objects} are interpreted as objects $A$ of a category (typically a
topos).
\item \emph{Meanings} of terms are interpreted as morphisms $f:A\to B$.
\item \emph{Names} correspond to global elements $1\to A$, i.e.\ morphisms from
the terminal object.
\item \emph{Existence} is expressed internally, via existential quantification
or, equivalently, by the epimorphicity of $A\to 1$.
\end{itemize}

Although these notions are related, the main theorem shows that none of them
can be reduced to, or reconstructed from, the others in a uniform way.

\subsection{A schematic picture}

The situation can be summarized by the following diagrammatic intuition:

\[
\begin{array}{ccc}
\text{Meaning} & \longleftrightarrow & \text{Morphisms } (A \to B) \\[4pt]
\uparrow & & \uparrow \\[4pt]
\text{Name} & \longleftrightarrow & \text{Global elements } (1 \to A) \\[4pt]
\uparrow & & \uparrow \\[4pt]
\text{Object} & \longleftrightarrow & \text{Objects } A \\[4pt]
\uparrow & & \uparrow \\[4pt]
\text{Existence} & \longleftrightarrow & \text{Internal logic } (\exists)
\end{array}
\]

The vertical arrows should \emph{not} be read as definitional reductions.
The main theorem asserts precisely that, in general, there is no way to move
upwards in this diagram so as to reconstruct one notion from those below it.

\subsection{Why existence and naming separate}

The most instructive case is the distinction between existence and naming.
In ordinary set-theoretic semantics, to exist often means to be named by an
element. In a geometric universe, however, existence is a local notion.

In the topos $\mathbf{Sh}(S^1)$, the sheaf of local sections of a nontrivial
covering is internally inhabited: locally, sections exist.
Nevertheless, no global section exists, and hence no name is available.
This single example already shows that existence cannot be identified with
naming, nor reduced to it.

\subsection{Reading the formal proof}

Each part of the proof of the main theorem corresponds to one failure of
reconstruction in the above schematic picture:
\begin{itemize}
\item distinct meanings with the same objects and names (in $\mathbf{Set}$);
\item absence or presence of names without changing meaning or existence;
\item internal existence without global names (in $\mathbf{Sh}(S^1)$);
\item non-isomorphic objects indistinguishable by limited semantic data.
\end{itemize}

The formal proof makes these failures precise using standard categorical
constructions. Conceptually, however, they reflect the fact that geometric
semantics separates local coherence from global identification.

\begin{figure}[t]
\centering
\begin{tikzpicture}[
  box/.style={draw, rounded corners, align=center, minimum width=3.6cm, minimum height=1cm},
  arrow/.style={->, thick},
  dashedarrow/.style={->, thick, dashed}
]

\node[box] (meaning) {Meaning\\{\small morphisms $A\to B$}};
\node[box, below=of meaning] (name) {Name\\{\small global elements $1\to A$}};
\node[box, below=of name] (object) {Object\\{\small objects $A$}};
\node[box, below=of object] (existence) {Existence\\{\small internal logic $\exists$}};

\draw[arrow] (meaning) -- (name);
\draw[arrow] (name) -- (object);
\draw[arrow] (object) -- (existence);

\draw[dashedarrow] (existence.east) -- ++(2.2,0) |- (object.east);
\draw[dashedarrow] (existence.east) -- ++(2.8,0) |- (name.east);
\draw[dashedarrow] (existence.east) -- ++(3.4,0) |- (meaning.east);

\draw[dashedarrow] (object.west) -- ++(-2.2,0) |- (meaning.west);
\draw[dashedarrow] (name.west) -- ++(-2.8,0) |- (meaning.west);

\end{tikzpicture}

\caption{Four categorical levels and their independence.
Meaning, name, object, and existence live at distinct structural levels in a
categorical semantics. Solid arrows indicate structural relations, while dashed
arrows indicate \emph{non-recoverability} established by the independence
theorem. In particular, internal existence does not determine global naming.}
\label{fig:independence}
\end{figure}
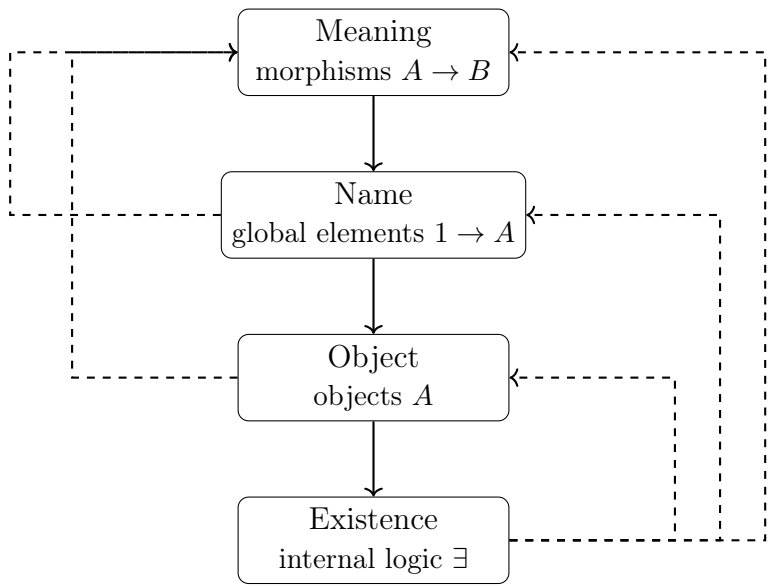

As illustrated in Figure~\ref{fig:independence}, the four notions form a
stratified structure: while they are related, the independence theorem shows
that no upward reconstruction is available in general.

\section{Conclusion}

In this paper, we established a category-theoretic independence theorem for
four fundamental notions: meaning, object, name, and existence.
Within a Lawvere-style categorical semantics and, in particular, in toposes,
we showed that these notions occupy distinct structural levels and are not
uniformly recoverable from one another.

A central separation concerns existence and naming.
Using a concrete example in the topos $\mathbf{Sh}(S^1)$, we exhibited an object
that is internally inhabited but admits no global element.
This demonstrates that existence, as expressed in the internal logic of a
geometric universe, cannot in general be reduced to objecthood or to the
availability of names.

These results provide a precise categorical foundation for distinctions that
have often been treated only conceptually, and they clarify how geometric
universes support a semantics in which existence is governed by local
coherence rather than global identification.

\section{The Final Remark}

The present work should be understood as a mathematical development of earlier
philosophical investigations.
In particular, the conceptual analysis of meaning, object, name, and existence
within Lawvere's categorical semantics presented in
\cite{Inoue2026four} is here strengthened by an explicit category-theoretic
independence theorem.

Lawvere's framework provides a powerful and unifying basis for categorical
semantics.
At the same time, the results of this paper show that further structural
distinctions—especially between internal existence and global naming—become
visible when one works explicitly in geometric universes.
In this sense, the present work may be regarded as a case study demonstrating
how such distinctions can be made mathematically precise within, and beyond,
Lawvere's original setting.

These observations suggest several directions for further development, 
including the treatment of names as modal objects, a direction that will
be explored in future work.

\begin{remark}
The independence theorem proved in this paper can be viewed as a
category-theoretic strengthening of the philosophical distinctions proposed in
\cite{Inoue2026four}, providing a structural foundation for them in geometric
universes.
\end{remark}

\begin{remark}
The distinctions established here also contribute to the geometric
interpretation of information networks developed in \cite{Inoue2026Network},
where local coherence, rather than global naming, plays a central structural
role.
\end{remark}

$$ $$

\bigskip 

\noindent Takao Inou\'{e}

\noindent Faculty of Informatics

\noindent Yamato University

\noindent Katayama-cho 2-5-1, Suita, Osaka, 564-0082, Japan

\noindent inoue.takao@yamato-u.ac.jp
 
\noindent (Personal) takaoapple@gmail.com (I prefer my personal mail)

\end{document}